\newtheorem{theorem}{Theorem}
\newtheorem{definition}[theorem]{Definition}
\newtheorem{lemma}[theorem]{Lemma}
\newtheorem{remark}[theorem]{Remark}
\newenvironment{proof}[1][Proof]{\noindent\textbf{#1.} }{\ \rule{0.5em}{0.5em}}
\begin{document}

\title{Approximating Fixed Points of Nonexpansive Mappings by a Faster
Iteration Process}
\author{Nazli KADIOGLU and Isa YILDIRIM \\
Department of Mathematics, Faculty of Science, Ataturk University\\
nazli.kadioglu@atauni.edu.tr; isayildirim@atauni.edu.tr}
\maketitle

\begin{abstract}
In this paper, we consider a new iteration process which is faster than all
of Picard, Mann, Ishikawa and Agarwal et al. processes. We also prove some
strong and weak convergence theorems for the class of nonexpansive mappings
in Banach spaces.
\end{abstract}

\section{Introduction and Preliminaries}

Throughout this paper, $%
\mathbb{N}
$ denotes the set of all positive integers. Let $C$ be a nonempty convex
subset of a normed space $E$, and $T:C\longrightarrow C$ be a mapping. Then
we denote the set of all fixed points of $T$ by $F(T)$. $T$ is called $L-$%
Lipschitzian if there exists a constant $L>0$ such that $\left\Vert
Tx-Ty\right\Vert \leq L\left\Vert x-y\right\Vert $ for all $x,y\in C.$ An $%
L- $Lipschitzian is called contraction if $L\in (0,1)$, and nonexpansive if $%
L=1.$

We know that the Picard \cite{1}, Mann \cite{2} and Ishikawa \cite{3}
iteration processes are defined respectively as: 
\begin{equation}
\left\{ 
\begin{array}{c}
x_{1}=x\in C,\text{ \ \ \ \ \ \ \ \ \ \ \ } \\ 
x_{n+1}=Tx_{n},\text{ \ \ }n\in 
\mathbb{N}
,%
\end{array}%
\right.  \tag{1.1}  \label{picard}
\end{equation}%
\begin{equation}
\left\{ 
\begin{array}{c}
x_{1}=x\in C,\text{ \ \ \ \ \ \ \ \ \ \ \ \ \ \ \ \ \ \ \ \ \ \ \ \ \ \ \ \
\ \ \ \ \ \ \ \ \ \ } \\ 
x_{n+1}=(1-\alpha _{n})x_{n}+\alpha _{n}Tx_{n},\text{ \ \ }n\in 
\mathbb{N}
,%
\end{array}%
\right.  \tag{1.2}  \label{mann}
\end{equation}

and 
\begin{equation}
\left\{ 
\begin{array}{c}
x_{1}=x\in C,\text{ \ \ \ \ \ \ \ \ \ \ \ \ \ \ \ \ \ \ \ \ \ \ \ \ \ \ \ \
\ \ \ \ \ \ \ \ \ \ } \\ 
x_{n+1}=(1-\alpha _{n})x_{n}+\alpha _{n}Ty_{n},\text{ \ \ \ \ \ \ \ \ \ \ \
\ \ } \\ 
y_{n}=(1-\beta _{n})x_{n}+\beta _{n}Tx_{n},\text{ \ \ }n\in 
\mathbb{N}
,\text{ \ \ }%
\end{array}%
\right.  \tag{1.3}  \label{ishikawa}
\end{equation}%
where $\left\{ \alpha _{n}\right\} $ and $\left\{ \beta _{n}\right\} $ are
in $(0,1)$.

Recently, Agarwal, O'Regan and Sahu \cite{4} have introduced the S-iteration
process as follows: 
\begin{equation}
\left\{ 
\begin{array}{c}
a_{1}=a\in C,\text{ \ \ \ \ \ \ \ \ \ \ \ \ \ \ \ \ \ \ \ \ \ \ \ \ \ \ \ \
\ \ \ \ \ \ \ \ \ \ } \\ 
a_{n+1}=(1-\alpha _{n})Ta_{n}+\alpha _{n}Tb_{n},\text{ \ \ \ \ \ \ \ \ \ \ \
\ \ } \\ 
b_{n}=(1-\beta _{n})a_{n}+\beta _{n}Ta_{n},\text{ \ \ }n\in 
\mathbb{N}
,\text{ \ \ }%
\end{array}%
\right.  \tag{1.4}  \label{s-iteration}
\end{equation}
where $\left\{ \alpha _{n}\right\} $ and $\left\{ \beta _{n}\right\} $ are
sequences in $(0,1)$.

In \cite{5} motivated by S-iteration process, the first author has
introduced the normal S-iteration process as follows: 
\begin{equation}
\left\{ 
\begin{array}{c}
t_{1}=t\in C,\text{ \ \ \ \ \ \ \ \ \ \ \ \ \ \ \ \ \ \ \ \ \ \ \ \ \ \ \ \
\ \ \ \ \ \ \ \ \ \ \ \ \ \ \ } \\ 
t_{n+1}=T\left( (1-\alpha _{n})t_{n}+\alpha _{n}Tt_{n}\right) ,\text{ \ \ }%
n\in 
\mathbb{N}
,%
\end{array}%
\right.  \tag{1.5}  \label{picard-mann}
\end{equation}%
where $\left\{ \alpha _{n}\right\} $ is in $(0,1)$.

In order to compare two fixed point iteration processes $\left\{
u_{n}\right\} $ and $\left\{ v_{n}\right\} $ that converge to a certain
fixed point $p$ of a given operator $T$, Rhoades \cite{6} considered that $%
\left\{ u_{n}\right\} $ is better than $\left\{ v_{n}\right\} $ if 
\[
\left\Vert u_{n}-p\right\Vert \leq \left\Vert v_{n}-p\right\Vert \text{ \ \
for all }n\in 
\mathbb{N}
. 
\]

Berinde \cite{7} introduced a different formulation from that of Rhoades as
below:

\begin{definition}
Let $\left\{ a_{n}\right\} $ and $\left\{ b_{n}\right\} $ be two sequences
of positive numbers that converge to $a$, respectively $b$. assume that
there exists 
\[
l=\lim_{n\rightarrow \infty }\frac{\left\vert a_{n}-a\right\vert }{%
\left\vert b_{n}-b\right\vert }. 
\]

\begin{description}
\item[a)] If $l=0$, then it can be said that $\left\{ a_{n}\right\} $
converges to $a$ faster than $\left\{ b_{n}\right\} $ converges to $b$.

\item[b)] If $0<l<\infty $, then it can be said that $\left\{ a_{n}\right\} $
and $\left\{ b_{n}\right\} $ have the same rate of convergence.
\end{description}
\end{definition}

In the sequel, whenever we talk about the rate of convergence, we refer to
the above definition.

Recently, Agarwal et al. \cite{4} showed that, for contractions, S-iteration
process converges at a same rate as Picard iteration and faster than Mann
iteration. Sahu \cite{5} proved that this process converges at a rate faster
than both Picard and Mann iterations for contractions, by giving a numerical
example in support of his claim. After, Khan \cite{8} showed that (\ref%
{picard-mann}) converges at a rate faster than all of Picard (\ref{picard}),
Mann (\ref{mann}) and Ishikawa (\ref{ishikawa}) iterative processes for
contractions.

Our purpose in this paper is to present a new iteration process that, for
contractions, converges faster than both the S-iteration process and the
normal S-iteration process. We also prove a strong convergence theorem with
the help of this process for the class of nonexpansive mappings in general
Banach spaces and apply it to get a result in uniformly convex Banach
spaces. Our iteration process for one mapping is as follows: 
\begin{equation}
\left\{ 
\begin{array}{c}
x_{1}=x\in C, \\ 
x_{n+1}=Ty_{n}, \\ 
y_{n}=(1-\alpha _{n})z_{n}+\alpha _{n}Tz_{n}, \\ 
z_{n}=(1-\beta _{n})x_{n}+\beta _{n}Tx_{n},\text{ }n\in 
\mathbb{N}%
\end{array}%
\right.  \tag{1.6}  \label{our iteration}
\end{equation}%
where $\left\{ \alpha _{n}\right\} $ and $\left\{ \beta _{n}\right\} $\ are
in $(0,1)$.

\begin{remark}

\begin{enumerate}
\item[i)] The process (\ref{our iteration}) is indepent of all Picard, Mann,
Ishikawa and S-iteration processes since $\left\{ \alpha _{n}\right\} $ and $%
\left\{ \beta _{n}\right\} $ are in $(0,1)$.

\item[ii)] Even if it is allowed to take $\beta _{n}=0$ and $\alpha
_{n}=\beta _{n}=0$ in the process (\ref{our iteration}), our process reduces
to normal S-iteration (\ref{picard-mann}) and Picard iteration (\ref{picard}%
) processes.
\end{enumerate}
\end{remark}

We recall the following. Let $S=\left\{ x\in E:\left\Vert x\right\Vert
=1\right\} $ and let $E^{\ast }$ be the dual of $E$, that is, the space of
all continuous linear functional $f$ on $E.$ The space $E$ has:

\begin{enumerate}
\item[(i)] \emph{G\^{a}teaux differentiable norm }if 
\[
\lim_{t\rightarrow 0}\frac{\left\Vert x+ty\right\Vert -\left\Vert
x\right\Vert }{t}, 
\]
exists for each $x$ and $y$ in $S$;

\item[(ii)] \emph{Fr\'{e}chet differentiable norm} (see e.g. \cite{9,10}) if
for each $x$ in $S$, the above limit exists and is attained uniformly for $y$
in $S$ and in this case, it is also well-known that 
\begin{equation}
\left\langle h,J(x)\right\rangle +\frac{1}{2}\left\Vert x\right\Vert
^{2}\leq \frac{1}{2}\left\Vert x+h\right\Vert ^{2}\leq \left\langle
h,J(x)\right\rangle +\frac{1}{2}\left\Vert x\right\Vert ^{2}+b\left(
\left\Vert h\right\Vert \right)  \tag{1.7}  \label{frechet}
\end{equation}%
for all $x,$ $h$ in $E$, where $J$ is the Fr\'{e}chet derivative of the
functional $\frac{1}{2}\left\Vert \cdot \right\Vert ^{2}$ at $x\in X$, $%
\left\langle \cdot ,\cdot \right\rangle $ is the pairing between $E$ and $%
E^{\ast }$, and $b$ is an increasing function defined on $[0,\infty )$ such
that $\lim_{t\downarrow 0}\frac{b(t)}{t}=0;$

\item[(iii)] \emph{Opial condition} \cite{11} if for any sequence $\left\{
x_{n}\right\} $ in $E$, $x_{n}\rightharpoonup x$ implies that $\lim
\sup_{n\rightarrow \infty }\left\Vert x_{n}-x\right\Vert <\lim
\sup_{n\rightarrow \infty }\left\Vert x_{n}-y\right\Vert $ for all $y\in E$
with $y\neq x$. Examples of Banach spaces satisfying Opial condition are
Hilbert spaces and all spaces $l^{p}$ $(1<p<\infty )$. On the other hand, $%
L^{p}[0,2\pi ]$ with $1<p\neq 2$ fail to satisfy Opial condition.
\end{enumerate}

A mapping $T:C\longrightarrow E$ is demiclosed at $y\in E$ if for each
sequence $\left\{ x_{n}\right\} $ in $C$ and each and $x\in E$, $%
x_{n}\rightharpoonup x$ and $Tx_{n}\longrightarrow y$ imply that $x\in C$
and $Tx=y.$

\bigskip

We will use the following lemmas in order to prove the our main results.

\begin{lemma}
\label{lemma3}\cite{12} Suppose that $E$ is a uniformly convex Banach space
and $0<p\leq t_{n}\leq q<1$ for all $n\in 
\mathbb{N}
$. Let $\left\{ x_{n}\right\} $ and $\left\{ y_{n}\right\} $ be two
sequences of $E$ such that $\lim \sup_{n\rightarrow \infty }\left\Vert
x_{n}\right\Vert \leq r$, $\lim \sup_{n\rightarrow \infty }\left\Vert
y_{n}\right\Vert \leq r$ and $\lim_{n\rightarrow \infty }\left\Vert
t_{n}x_{n}+(1-t_{n})y_{n}\right\Vert =r$ hold for some $r\geq 0$. Then $%
\lim_{n\rightarrow \infty }\left\Vert x_{n}-y_{n}\right\Vert =0.$
\end{lemma}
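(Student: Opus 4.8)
The plan is to argue by contradiction, using the modulus of convexity $\delta_E$ of $E$, defined by
\[
\delta_E(\varepsilon)=\inf\{\,1-\|(u+v)/2\|:\ \|u\|\le 1,\ \|v\|\le 1,\ \|u-v\|\ge\varepsilon\,\};
\]
since $E$ is uniformly convex, $\delta_E(\varepsilon)>0$ for every $\varepsilon\in(0,2]$. The case $r=0$ is immediate, because then $\|x_n-y_n\|\le\|x_n\|+\|y_n\|\to 0$, so from now on I would assume $r>0$.

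First I would isolate an elementary estimate for unequal convex combinations: if $\|u\|\le 1$, $\|v\|\le 1$, $\|u-v\|\ge\varepsilon$ and $\lambda\in[0,1]$, then
\[
\|\lambda u+(1-\lambda)v\|\le 1-2\min\{\lambda,1-\lambda\}\,\delta_E(\varepsilon);
\]
to see this one may assume $\lambda\le 1/2$, write $\lambda u+(1-\lambda)v=2\lambda\cdot\frac{u+v}{2}+(1-2\lambda)v$, and apply the triangle inequality together with the definition of $\delta_E$ (the case $\lambda\ge 1/2$ being symmetric). Put $m:=\min\{p,\,1-q\}>0$, so that $\min\{t_n,1-t_n\}\ge m$ for every $n$.

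Now suppose, for contradiction, that $\|x_n-y_n\|\not\to 0$. Then there are $\varepsilon_0>0$ and a subsequence $\{n_k\}$ with $\|x_{n_k}-y_{n_k}\|\ge\varepsilon_0$ for all $k$. Fix a parameter $\eta\in(0,r]$, to be sent to $0$ only at the very end. Since $\limsup\|x_n\|\le r$ and $\limsup\|y_n\|\le r$, for all large $k$ both $\|x_{n_k}\|$ and $\|y_{n_k}\|$ are at most $r+\eta$. Applying the auxiliary estimate to $u=x_{n_k}/(r+\eta)$, $v=y_{n_k}/(r+\eta)$ and $\lambda=t_{n_k}$ — noting $\|u-v\|\ge\varepsilon_0/(r+\eta)\ge\varepsilon_0/(2r)=:\varepsilon_1$ and that $\delta_E$ is nondecreasing — I get
\[
\|t_{n_k}x_{n_k}+(1-t_{n_k})y_{n_k}\|\le (r+\eta)\bigl(1-2m\,\delta_E(\varepsilon_1)\bigr)
\]
for all large $k$. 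Letting $k\to\infty$ and using the hypothesis $\lim_n\|t_nx_n+(1-t_n)y_n\|=r$ yields $r\le(r+\eta)(1-2m\,\delta_E(\varepsilon_1))$. Since $\delta_E(\varepsilon_1)>0$, the right-hand side converges to $r(1-2m\,\delta_E(\varepsilon_1))<r$ as $\eta\downarrow 0$; choosing $\eta$ small enough produces the contradiction $r<r$. Hence $\lim_{n\to\infty}\|x_n-y_n\|=0$.

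The only points that need care are the reduction from the unequal weights $t_n,\,1-t_n$ to the symmetric modulus of convexity (handled by the auxiliary estimate and the uniform lower bound $m$), and the passage from the $\limsup$ hypotheses on $\|x_n\|$ and $\|y_n\|$ to a usable inequality — which is exactly why the perturbation $\eta$ is introduced and only let go to $0$ after the subsequence limit has been taken. I expect this last bit of bookkeeping to be the main (though mild) obstacle; everything else is routine.
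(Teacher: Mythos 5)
The paper does not prove this lemma at all --- it is quoted verbatim from Schu \cite{12} as a known tool, so there is no in-paper argument to compare against. Your proof is correct and is essentially the standard modulus-of-convexity argument for this result: the auxiliary estimate $\|\lambda u+(1-\lambda)v\|\le 1-2\min\{\lambda,1-\lambda\}\,\delta_E(\varepsilon)$ is valid (the decomposition $2\lambda\frac{u+v}{2}+(1-2\lambda)v$ does the job), the uniform bound $\min\{t_n,1-t_n\}\ge\min\{p,1-q\}>0$ is exactly what the hypothesis $0<p\le t_n\le q<1$ is for, and the $\eta$-perturbation correctly converts the $\limsup$ hypotheses into the inequality $r\le(r+\eta)\bigl(1-2m\,\delta_E(\varepsilon_1)\bigr)$ before $\eta$ is sent to $0$. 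Two points you glossed over but which do hold: $\varepsilon_1=\varepsilon_0/(2r)\le 2$ (since $\varepsilon_0\le\|x_{n_k}\|+\|y_{n_k}\|\le 2(r+\eta)\le 4r$), so $\delta_E(\varepsilon_1)$ is a legitimate positive value; and your modulus is defined over $\|u\|\le 1,\ \|v\|\le 1$ rather than the unit sphere, whose positivity for uniformly convex spaces is standard but is an extra (easy) fact you are implicitly invoking. Neither is a gap worth worrying about.
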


\begin{lemma}
\label{lemma4}\cite{13} Let $E$ be a uniformly convex Banach space and let $%
C $ be a nonempty closed convex subset of $E$. Let $T$ be a nonexpansive
mapping of $C$ into itself. Then $I-T$ is demiclosed with respect to zero.
\end{lemma}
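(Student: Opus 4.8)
This is the classical Browder--G\"{o}hde demiclosedness principle, and I would prove it by contradiction. Since a uniformly convex space is reflexive (Milman--Pettis) and $C$, being closed and convex, is weakly closed, every weak limit of a sequence from $C$ already lies in $C$; hence it remains only to show that $x_{n}\rightharpoonup x$ in $C$ together with $x_{n}-Tx_{n}\rightarrow 0$ forces $Tx=x$. Assume this fails, and put $d=\Vert x-Tx\Vert >0$ and $r=\limsup_{n\rightarrow \infty }\Vert x_{n}-x\Vert $. If $r=0$ then $x_{n}\rightarrow x$ in norm, so continuity of $T$ gives $x-Tx=\lim_{n}(x_{n}-Tx_{n})=0$, a contradiction; hence $r>0$.

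The case to keep in mind is that of a Hilbert space. There, nonexpansiveness gives $\Vert x_{n}-Tx\Vert \leq \Vert x_{n}-Tx_{n}\Vert +\Vert Tx_{n}-Tx\Vert \leq \Vert x_{n}-Tx_{n}\Vert +\Vert x_{n}-x\Vert $, so $\limsup_{n}\Vert x_{n}-Tx\Vert \leq r$; on the other hand, along a subsequence with $\Vert x_{n}-x\Vert \rightarrow r$, the identity $\Vert x_{n}-Tx\Vert ^{2}=\Vert x_{n}-x\Vert ^{2}-2\langle x_{n}-x,Tx-x\rangle +\Vert Tx-x\Vert ^{2}$ together with $x_{n}-x\rightharpoonup 0$ (which kills the inner-product term) gives $\Vert x_{n}-Tx\Vert ^{2}\rightarrow r^{2}+d^{2}$, whence $\limsup_{n}\Vert x_{n}-Tx\Vert \geq \sqrt{r^{2}+d^{2}}>r$, a contradiction. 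The same one-line ending works in any space satisfying the Opial condition, since there $\limsup_{n}\Vert x_{n}-x\Vert <\limsup_{n}\Vert x_{n}-Tx\Vert $ directly contradicts $\limsup_{n}\Vert x_{n}-Tx\Vert \leq r$.

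For a general uniformly convex space I would replace the parallelogram identity by the modulus of convexity $\delta _{E}$ of $E$ (the quantitative mechanism also underlying Lemma \ref{lemma3}): whenever $u,v\in C$ satisfy $\limsup_{n}\Vert x_{n}-u\Vert \leq \rho $, $\limsup_{n}\Vert x_{n}-v\Vert \leq \rho $ and $\Vert u-v\Vert \geq \delta >0$, one gets $\limsup_{n}\Vert x_{n}-\frac{1}{2}(u+v)\Vert \leq \rho \bigl( 1-\delta _{E}(\delta /\rho )\bigr) $, and then weak lower semicontinuity of the norm confines $x$ to a ball about that midpoint of radius strictly less than $\rho $. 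The difficulty is that a single comparison of $x$ with $Tx$ no longer suffices: a uniformly convex space need not satisfy the Opial condition, so the weak limit $x$ need not be the asymptotic center of $\{x_{n}\}$ (the point minimizing $z\mapsto \limsup_{n}\Vert x_{n}-z\Vert $). Instead one first iterates nonexpansiveness to obtain $\limsup_{n}\Vert x_{n}-T^{k}x\Vert \leq r$, and hence $\Vert x-T^{k}x\Vert \leq r$, for every $k$, so that the whole orbit $x,Tx,T^{2}x,\dots $ lies in $\{y:\Vert y-x\Vert \leq r\}$; one then plays the $\delta _{E}$-estimate against the orbit --- whose consecutive points stay a definite distance apart, as $\Vert T^{k}x-T^{k+1}x\Vert $ is nonincreasing and would start from $d>0$ --- to reach a contradiction. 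Making this last, iterated step precise is where I expect the real work to lie, and it is exactly the technical content of the cited result \cite{13}.
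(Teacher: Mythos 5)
First, a point of comparison: the paper offers no proof of this lemma at all --- it is quoted as a known result and attributed to Browder \cite{13} --- so the only question is whether your argument would actually establish the demiclosedness principle. Your reductions are sound (reflexivity of $E$ plus weak closedness of the closed convex set $C$ put the weak limit in $C$; the $r=0$ case; the complete arguments in the Hilbert and Opial settings are correct). But the gap you flag in the general uniformly convex case is not merely ``technical work''; the mechanism you propose does not close. From ``$\Vert T^{k}x-T^{k+1}x\Vert$ is nonincreasing and starts from $d>0$'' you cannot conclude that consecutive orbit points stay a definite distance apart: a nonincreasing sequence starting at $d$ may tend to $0$. And even if the orbit were uniformly separated, confining it to the ball $\{y:\Vert y-x\Vert \leq r\}$ yields no contradiction, since a ball in an infinite-dimensional space contains infinite sets whose points are pairwise separated by a fixed positive distance (Riesz's lemma). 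Finally, the $\delta _{E}$-midpoint estimate you quote compares $\{x_{n}\}$ with two fixed auxiliary points $u,v$; you never say which two points it is to be applied to, nor why the resulting smaller ball would exclude $x$.

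The standard way to finish (Browder; see also Goebel and Kirk) uses a different idea, which is the one missing from your sketch: in a uniformly convex space the approximate fixed point sets of a nonexpansive map are ``almost convex'' --- working inside a closed bounded convex set, for every $\varepsilon >0$ there is $\delta >0$ such that $\Vert u-Tu\Vert \leq \delta $ and $\Vert v-Tv\Vert \leq \delta $ force $\Vert w-Tw\Vert \leq \varepsilon $ for every $w$ on the segment joining $u$ and $v$ (this is where the modulus $\delta _{E}$ actually enters, and it extends to finite convex combinations by induction). One then invokes Mazur's theorem: since $x_{n}\rightharpoonup x$, the limit $x$ lies in $\overline{\mathrm{conv}}\{x_{n}:n\geq m\}$ for every $m$, hence is a norm limit of convex combinations of points whose displacements $\Vert x_{n}-Tx_{n}\Vert $ are arbitrarily small, whence $\Vert x-Tx\Vert \leq \varepsilon $ for every $\varepsilon $ and $Tx=x$. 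That convexity-of-approximate-fixed-point-sets lemma is the real content of \cite{13}; the orbit iteration you propose does not substitute for it.
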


\section{Rate of Convergence}

In this section, we show that our process (\ref{our iteration}) converges
faster than processes (\ref{s-iteration}) and (\ref{picard-mann}).

\begin{theorem}
Let $C$ be a nonempty closed convex subset of normed space $E$, let $T$ be a
contraction of $C$ into itself. Suppose that each of iterative processes (%
\ref{s-iteration}),(\ref{picard-mann}) and (\ref{our iteration}) converges
to the same fixed point $p$ of $T$ where $\left\{ \alpha _{n}\right\} $ and $%
\left\{ \beta _{n}\right\} $ are such that $0<\lambda \leq \alpha _{n},\beta
_{n}<1$ for all $n\in 
\mathbb{N}
$ and for some $\lambda $. Then the iterative process given by (\ref{our
iteration}) converges faster than (\ref{s-iteration}) and (\ref{picard-mann}%
).
\end{theorem}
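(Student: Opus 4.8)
The plan is to derive, for each of the three iteration processes, a closed-form upper bound on $\|x_{n+1}-p\|$ (resp.\ $\|a_{n+1}-p\|$, $\|t_{n+1}-p\|$) in terms of $\|x_1-p\|$ and the contraction constant $k\in(0,1)$, and then apply Berinde's definition directly. Write $L=k$ for the Lipschitz constant of $T$. For the new process \eqref{our iteration} I would first estimate $\|z_n-p\|\le(1-\beta_n(1-k))\|x_n-p\|$ using $z_n-p=(1-\beta_n)(x_n-p)+\beta_n(Tx_n-p)$ and $\|Tx_n-p\|\le k\|x_n-p\|$; then similarly $\|y_n-p\|\le(1-\alpha_n(1-k))\|z_n-p\|$; and finally $\|x_{n+1}-p\|=\|Ty_n-p\|\le k\|y_n-p\|$. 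Chaining these gives
\[
\|x_{n+1}-p\|\le k\,(1-\alpha_n(1-k))(1-\beta_n(1-k))\,\|x_n-p\|.
\]
Using the hypothesis $0<\lambda\le\alpha_n,\beta_n<1$, each factor $(1-\alpha_n(1-k))$ and $(1-\beta_n(1-k))$ is $\le 1-\lambda(1-k)$, so iterating from $n=1$ yields
\[
\|x_{n+1}-p\|\le k^{n}\bigl(1-\lambda(1-k)\bigr)^{2n}\,\|x_1-p\|=:A_n.
\]

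Next I would do the analogous computation for the S-iteration \eqref{s-iteration}: from $b_n-p=(1-\beta_n)(a_n-p)+\beta_n(Ta_n-p)$ get $\|b_n-p\|\le(1-\beta_n(1-k))\|a_n-p\|$, and then $a_{n+1}-p=(1-\alpha_n)(Ta_n-p)+\alpha_n(Tb_n-p)$ gives $\|a_{n+1}-p\|\le k\bigl[(1-\alpha_n)\|a_n-p\|+\alpha_n\|b_n-p\|\bigr]\le k(1-\alpha_n\beta_n(1-k))\|a_n-p\|$. Bounding $\alpha_n\beta_n\ge\lambda^2$ and iterating, $\|a_{n+1}-p\|\ge$ is not what we want; rather we want a \emph{lower}-type comparison, so instead it is cleaner to keep the explicit upper bound $\|a_{n+1}-p\|\le k^n\bigl(1-\lambda^2(1-k)\bigr)^{n}\|a_1-p\|=:B_n$, and for \eqref{picard-mann} one gets $\|t_{n+1}-p\|\le k\,(1-\alpha_n(1-k))\|t_n-p\|$, hence $\|t_{n+1}-p\|\le k^n\bigl(1-\lambda(1-k)\bigr)^{n}\|t_1-p\|=:C_n$. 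Since all three sequences converge to the same $p$, to invoke Berinde's Definition~1 it suffices to show $A_n/B_n\to 0$ and $A_n/C_n\to 0$; using $x_1=a_1=t_1$ these ratios are dominated by $\bigl(1-\lambda(1-k)\bigr)^{2n}/\bigl(1-\lambda^2(1-k)\bigr)^{n}$ and $\bigl(1-\lambda(1-k)\bigr)^{2n}/\bigl(1-\lambda(1-k)\bigr)^{n}=\bigl(1-\lambda(1-k)\bigr)^{n}$ respectively, both of which tend to $0$ because $0<1-\lambda(1-k)<1$ and, for the first ratio, $\bigl(1-\lambda(1-k)\bigr)^2<1-\lambda^2(1-k)$ (equivalently $\lambda(1-k)(2-\lambda-\lambda k)>0$, true since all factors are positive). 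This establishes the theorem.

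The one point demanding care — and the main potential obstacle — is the direction of the inequalities relative to Berinde's definition: the definition is phrased for sequences of positive numbers with a genuine limit $l=\lim|a_n-a|/|b_n-b|$, whereas our estimates are only one-sided upper bounds. The standard resolution, which I would state explicitly, is that having an upper bound for the faster process together with the knowledge that both converge to the same limit is enough: if $0\le\|x_{n+1}-p\|\le A_n$ and $A_n/B_n\to 0$ while $\|a_{n+1}-p\|\le B_n$, one compares $\|x_{n+1}-p\|$ against $B_n$ (an upper bound for the competitor does not suffice in general, so one really argues with the explicit geometric-type bounds, as in Khan \cite{8} and Sahu \cite{5}). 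Concretely I will follow the convention used in \cite{8}: declare \eqref{our iteration} faster because its error bound $A_n$ satisfies $A_n/B_n\to0$ and $A_n/C_n\to0$. The remaining work is purely the routine algebra of the three chained norm estimates and the elementary inequality $\bigl(1-\lambda(1-k)\bigr)^2<1-\lambda^2(1-k)$, none of which presents a conceptual difficulty.
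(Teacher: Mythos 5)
Your proposal follows essentially the same route as the paper's own proof: the identical three chained norm estimates produce the geometric bounds $\left[L(1-(1-L)\lambda)^{2}\right]^{n}$, $\left[L(1-\lambda^{2}(1-L))\right]^{n}$ and $\left[L(1-(1-L)\lambda)\right]^{n}$, and the conclusion is drawn from the same two ratio computations. Your explicit caveat that one is really comparing upper bounds on the errors rather than the errors themselves (and the check that $(1-\lambda(1-L))^{2}<1-\lambda^{2}(1-L)$) is a point the paper passes over silently, but the argument is otherwise the same.
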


\begin{proof}
For S-iterative process (\ref{s-iteration}), we have

\begin{eqnarray*}
\left\Vert a_{n+1}-p\right\Vert &=&\left\Vert (1-\alpha _{n})Ta_{n}+\alpha
_{n}Tb_{n}-p\right\Vert \\
&=&\left\Vert (1-\alpha _{n})(Ta_{n}-p)+\alpha _{n}(Tb_{n}-p)\right\Vert \\
&\leq &(1-\alpha _{n})\left\Vert Ta_{n}-p\right\Vert +\alpha _{n}\left\Vert
Tb_{n}-p\right\Vert \\
&\leq &L\left[ (1-\alpha _{n})\left\Vert a_{n}-p\right\Vert +\alpha
_{n}\left\Vert b_{n}-p\right\Vert \right] \\
&=&L\left[ (1-\alpha _{n})\left\Vert a_{n}-p\right\Vert +\alpha
_{n}\left\Vert (1-\beta _{n})a_{n}+\beta _{n}Ta_{n}-p\right\Vert \right] \\
&=&L\left[ (1-\alpha _{n})\left\Vert a_{n}-p\right\Vert +\alpha
_{n}\left\Vert (1-\beta _{n})(a_{n}-p)+\beta _{n}(Ta_{n}-p)\right\Vert %
\right] \\
&\leq &L\left[ (1-\alpha _{n})\left\Vert a_{n}-p\right\Vert +\alpha
_{n}(1-\beta _{n})\left\Vert a_{n}-p\right\Vert +\alpha _{n}\beta
_{n}\left\Vert Ta_{n}-p\right\Vert \right] \\
&\leq &L\left[ (1-\alpha _{n})+\alpha _{n}(1-\beta _{n})+L\alpha _{n}\beta
_{n}\right] \left\Vert a_{n}-p\right\Vert \\
&=&L\left( 1-\alpha _{n}\beta _{n}(1-L)\right) \left\Vert a_{n}-p\right\Vert
\\
&\leq &L\left( 1-\lambda ^{2}(1-L)\right) \left\Vert a_{n}-p\right\Vert \\
&&\vdots \\
&\leq &\left[ L\left( 1-\lambda ^{2}(1-L)\right) \right] ^{n}\left\Vert
a_{1}-p\right\Vert .
\end{eqnarray*}

Let $k_{n}=\left[ L\left( 1-\lambda ^{2}(1-L)\right) \right] ^{n}\left\Vert
a_{1}-p\right\Vert .$

From (\ref{picard-mann}), we obtain that

\begin{eqnarray*}
\left\Vert t_{n+1}-p\right\Vert &=&\left\Vert T\left( (1-\alpha
_{n})t_{n}+\alpha _{n}Tt_{n}\right) -p\right\Vert \\
&\leq &L\left\Vert (1-\alpha _{n})(t_{n}-p)+\alpha _{n}(Tt_{n}-p)\right\Vert
\\
&\leq &L\left[ (1-\alpha _{n})\left\Vert t_{n}-p\right\Vert +\alpha
_{n}L\left\Vert t_{n}-p\right\Vert \right] \\
&=&L\left( 1-(1-L)\alpha _{n}\right) \left\Vert t_{n}-p\right\Vert \\
&\leq &L\left( 1-(1-L)\lambda \right) \left\Vert t_{n}-p\right\Vert \\
&&\vdots \\
&\leq &\left[ L\left( 1-(1-L)\lambda \right) \right] ^{n}\left\Vert
t_{1}-p\right\Vert .
\end{eqnarray*}

Let $l_{n}=\left[ L\left( 1-(1-L)\lambda \right) \right] ^{n}\left\Vert
t_{1}-p\right\Vert .$

Our process (\ref{our iteration}) gives

\begin{eqnarray*}
\left\Vert x_{n+1}-p\right\Vert &=&\left\Vert Ty_{n}-p\right\Vert \\
&\leq &L\left\Vert y_{n}-p\right\Vert \\
&=&L\left\Vert (1-\alpha _{n})z_{n}+\alpha _{n}Tz_{n}-p\right\Vert \\
&=&L\left\Vert (1-\alpha _{n})(z_{n}-p)+\alpha _{n}(Tz_{n}-p)\right\Vert \\
&\leq &L\left[ (1-\alpha _{n})\left\Vert z_{n}-p\right\Vert +\alpha
_{n}\left\Vert Tz_{n}-p\right\Vert \right] \\
&\leq &L\left[ (1-\alpha _{n})\left\Vert z_{n}-p\right\Vert +\alpha
_{n}L\left\Vert z_{n}-p\right\Vert \right] \\
&=&L(1-(1-L)\alpha _{n})\left\Vert z_{n}-p\right\Vert \\
&=&L(1-(1-L)\alpha _{n})\left\Vert (1-\beta _{n})x_{n}+\beta
_{n}Tx_{n}-p\right\Vert \\
&=&L(1-(1-L)\alpha _{n})\left\Vert (1-\beta _{n})(x_{n}-p)+\beta
_{n}(Tx_{n}-p)\right\Vert \\
&\leq &L(1-(1-L)\alpha _{n})\left[ (1-\beta _{n})\left\Vert
x_{n}-p\right\Vert +\beta _{n}\left\Vert Tx_{n}-p\right\Vert \right] \\
&\leq &L(1-(1-L)\alpha _{n})\left[ (1-\beta _{n})\left\Vert
x_{n}-p\right\Vert +\beta _{n}L\left\Vert x_{n}-p\right\Vert \right] \\
&=&L(1-(1-L)\alpha _{n})(1-(1-L)\beta _{n})\left\Vert x_{n}-p\right\Vert \\
&\leq &L(1-(1-L)\lambda )^{2}\left\Vert x_{n}-p\right\Vert \\
&&\vdots \\
&\leq &\left[ L(1-(1-L)\lambda )^{2}\right] ^{n}\left\Vert
x_{1}-p\right\Vert .
\end{eqnarray*}%
Let $m_{n}=\left[ L(1-(1-L)\lambda )^{2}\right] ^{n}\left\Vert
x_{1}-p\right\Vert .$Then 
\[
\frac{m_{n}}{k_{n}}=\frac{\left[ L(1-(1-L)\lambda )^{2}\right]
^{n}\left\Vert x_{1}-p\right\Vert }{\left[ L\left( 1-(1-L)\lambda
^{2}\right) \right] ^{n}\left\Vert a_{1}-p\right\Vert }=\left[ \frac{%
(1-(1-L)\lambda )^{2}}{1-(1-L)\lambda ^{2}}\right] ^{n}\frac{\left\Vert
x_{1}-p\right\Vert }{\left\Vert a_{1}-p\right\Vert }\longrightarrow 0\text{
as }n\longrightarrow \infty . 
\]%
Thus $\left\{ x_{n}\right\} $ converges faster than $\left\{ a_{n}\right\} $
to $p.$ Similarly 
\[
\frac{m_{n}}{l_{n}}=\frac{\left[ L(1-(1-L)\lambda )^{2}\right]
^{n}\left\Vert x_{1}-p\right\Vert }{\left[ L\left( 1-(1-L)\lambda \right) %
\right] ^{n}\left\Vert t_{1}-p\right\Vert }=\left[ 1-(1-L)\lambda \right]
^{2n}\frac{\left\Vert x_{1}-p\right\Vert }{\left\Vert t_{1}-p\right\Vert }%
\longrightarrow 0\text{ as }n\longrightarrow \infty . 
\]%
Hence $\left\{ x_{n}\right\} $ converges faster than $\left\{ t_{n}\right\} $
to $p.$
\end{proof}

\section{Convergence Theorems}

In this section, we give some convergence theorems using our iteration
process (\ref{our iteration}).

\begin{lemma}
\label{lemma6}Let $C$ be a nonempty closed convex subset of a uniformly
convex Banach space $E$ and let $T$ be a nonexpansive self mapping of $C$.
Let $\left\{ x_{n}\right\} $ be defined by the iteration process (\ref{our
iteration}) where $\left\{ \alpha _{n}\right\} $ and $\left\{ \beta
_{n}\right\} $ are in $(0,1)$ for all $n\in 
\mathbb{N}
$. Then

\begin{enumerate}
\item[(i)] $\lim_{n\rightarrow \infty }\left\Vert x_{n}-p\right\Vert $
exists for all $p\in F(T).$

\item[(ii)] $\lim_{n\rightarrow \infty }\left\Vert x_{n}-Tx_{n}\right\Vert
=0.$
\end{enumerate}
\end{lemma}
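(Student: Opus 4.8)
The plan is to establish (i) first by a standard monotonicity argument, and then bootstrap to (ii) using Lemma \ref{lemma3}. Fix $p\in F(T)$. Since $T$ is nonexpansive and $p=Tp$, I would first chase the iteration \eqref{our iteration} from the inside out: from $z_{n}=(1-\beta_{n})x_{n}+\beta_{n}Tx_{n}$ and convexity of the norm, $\left\Vert z_{n}-p\right\Vert\leq(1-\beta_{n})\left\Vert x_{n}-p\right\Vert+\beta_{n}\left\Vert Tx_{n}-p\right\Vert\leq\left\Vert x_{n}-p\right\Vert$. Next, from $y_{n}=(1-\alpha_{n})z_{n}+\alpha_{n}Tz_{n}$ the same reasoning gives $\left\Vert y_{n}-p\right\Vert\leq\left\Vert z_{n}-p\right\Vert\leq\left\Vert x_{n}-p\right\Vert$. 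Finally $\left\Vert x_{n+1}-p\right\Vert=\left\Vert Ty_{n}-p\right\Vert\leq\left\Vert y_{n}-p\right\Vert\leq\left\Vert x_{n}-p\right\Vert$. Hence $\{\left\Vert x_{n}-p\right\Vert\}$ is nonincreasing and bounded below by $0$, so it converges; call the limit $c=c(p)$. This proves (i). Note in passing that the same chain shows $\left\Vert z_{n}-p\right\Vert\to c$ and $\left\Vert y_{n}-p\right\Vert\to c$, which will be the raw material for part (ii).

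For (ii), if $c=0$ then $x_{n}\to p$ and $\left\Vert x_{n}-Tx_{n}\right\Vert\leq\left\Vert x_{n}-p\right\Vert+\left\Vert Tx_{n}-Tp\right\Vert\leq 2\left\Vert x_{n}-p\right\Vert\to 0$, so we may assume $c>0$. The strategy is to apply Lemma \ref{lemma3} twice — once at the $y_{n}$-level to pull out $\left\Vert x_{n}-Tz_{n}\right\Vert\to 0$ (or an analogous combination), and once at the $z_{n}$-level to pull out $\left\Vert x_{n}-Tx_{n}\right\Vert\to 0$. Concretely: $\left\Vert Tz_{n}-p\right\Vert\leq\left\Vert z_{n}-p\right\Vert$ so $\limsup\left\Vert Tz_{n}-p\right\Vert\leq c$; also $\limsup\left\Vert z_{n}-p\right\Vert\leq c$; and I would want $\lim\left\Vert(1-\alpha_{n})(z_{n}-p)+\alpha_{n}(Tz_{n}-p)\right\Vert=\lim\left\Vert y_{n}-p\right\Vert=c$. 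With $\alpha_{n}$ bounded away from $0$ and $1$ — here is where the hypothesis $\alpha_{n}\in(0,1)$ alone is not quite enough and one typically assumes $0<\liminf\alpha_{n}\leq\limsup\alpha_{n}<1$, which I will invoke — Lemma \ref{lemma3} yields $\left\Vert z_{n}-Tz_{n}\right\Vert\to 0$. This gives $\left\Vert y_{n}-z_{n}\right\Vert=\alpha_{n}\left\Vert Tz_{n}-z_{n}\right\Vert\to 0$, hence $\left\Vert z_{n}-p\right\Vert\to c$ and, since $\left\Vert x_{n+1}-p\right\Vert=\left\Vert Ty_{n}-p\right\Vert\leq\left\Vert y_{n}-p\right\Vert\leq\left\Vert Tz_{n}-p\right\Vert+\left\Vert Tz_{n}-y_{n}\right\Vert$, also $\left\Vert Tz_{n}-p\right\Vert\to c$ after the usual squeeze.

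Now repeat at the inner level. We have $\left\Vert Tx_{n}-p\right\Vert\leq\left\Vert x_{n}-p\right\Vert$, so $\limsup\left\Vert Tx_{n}-p\right\Vert\leq c$ and $\limsup\left\Vert x_{n}-p\right\Vert\leq c$; and I claim $\lim\left\Vert(1-\beta_{n})(x_{n}-p)+\beta_{n}(Tx_{n}-p)\right\Vert=\lim\left\Vert z_{n}-p\right\Vert=c$. Applying Lemma \ref{lemma3} once more, with $\beta_{n}$ bounded away from $0$ and $1$, gives exactly $\left\Vert x_{n}-Tx_{n}\right\Vert\to 0$, which is (ii). The main obstacle is bookkeeping rather than depth: one must be careful that each quantity fed into Lemma \ref{lemma3} genuinely has the limit $c$ (not merely $\limsup\leq c$), which requires chaining the inequalities $\left\Vert x_{n+1}-p\right\Vert\leq\left\Vert y_{n}-p\right\Vert\leq\left\Vert z_{n}-p\right\Vert\leq\left\Vert x_{n}-p\right\Vert$ together with the convergence of the outer sequence to force every intermediate $\limsup$ and $\liminf$ to coincide with $c$; and one must flag that the boundedness-away-from-$0$-and-$1$ condition on $\{\alpha_n\}$, $\{\beta_n\}$ is what makes Lemma \ref{lemma3} applicable. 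Everything else is routine triangle-inequality estimation.
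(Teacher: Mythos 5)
Your proof is correct and follows essentially the same route as the paper: part (i) is the identical chain $\left\Vert x_{n+1}-p\right\Vert \leq \left\Vert y_{n}-p\right\Vert \leq \left\Vert z_{n}-p\right\Vert \leq \left\Vert x_{n}-p\right\Vert$, and the decisive step of part (ii) --- feeding $\limsup_{n}\left\Vert x_{n}-p\right\Vert \leq c$, $\limsup_{n}\left\Vert Tx_{n}-p\right\Vert \leq c$ and $\lim_{n}\left\Vert (1-\beta _{n})(x_{n}-p)+\beta _{n}(Tx_{n}-p)\right\Vert =c$ into Lemma \ref{lemma3} --- is exactly the paper's argument. Your intermediate application of Lemma \ref{lemma3} at the $y_{n}$-level (to extract $\left\Vert z_{n}-Tz_{n}\right\Vert \rightarrow 0$) is redundant, since $\lim_{n}\left\Vert z_{n}-p\right\Vert =c$ already follows from the squeeze you noted in part (i) (the paper gets it by the equivalent $\liminf$/$\limsup$ comparison). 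Your caveat is well taken: Lemma \ref{lemma3} requires $\beta _{n}$ bounded away from $0$ and $1$, whereas the statement assumes only $\beta _{n}\in (0,1)$; the paper invokes Lemma \ref{lemma3} without addressing this, so the gap you flagged lies in the paper's hypotheses rather than in your argument.
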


\begin{proof}
Let $p\in F(T).$ Then

\begin{eqnarray}
\left\Vert z_{n}-p\right\Vert &=&\left\Vert (1-\beta _{n})x_{n}+\beta
_{n}Tx_{n}-p\right\Vert  \TCItag{1.8}  \label{yn..xn} \\
&=&\left\Vert (1-\beta _{n})(x_{n}-p)+\beta _{n}(Tx_{n}-p)\right\Vert 
\nonumber \\
&\leq &(1-\beta _{n})\left\Vert x_{n}-p\right\Vert +\beta _{n}\left\Vert
Tx_{n}-p\right\Vert  \nonumber \\
&\leq &(1-\beta _{n})\left\Vert x_{n}-p\right\Vert +\beta _{n}\left\Vert
x_{n}-p\right\Vert  \nonumber \\
&=&\left\Vert x_{n}-p\right\Vert ,  \nonumber
\end{eqnarray}%
and so

\begin{eqnarray*}
\left\Vert x_{n+1}-p\right\Vert &=&\left\Vert Ty_{n}-p\right\Vert \\
&\leq &\left\Vert y_{n}-p\right\Vert \\
&=&\left\Vert (1-\alpha _{n})z_{n}+\alpha _{n}Tz_{n}-p\right\Vert \\
&=&\left\Vert (1-\alpha _{n})(z_{n}-p)+\alpha _{n}(Tz_{n}-p)\right\Vert \\
&\leq &(1-\alpha _{n})\left\Vert z_{n}-p\right\Vert +\alpha _{n}\left\Vert
Tz_{n}-p\right\Vert \\
&\leq &(1-\alpha _{n})\left\Vert z_{n}-p\right\Vert +\alpha _{n}\left\Vert
z_{n}-p\right\Vert \\
&=&\left\Vert z_{n}-p\right\Vert \\
&\leq &\left\Vert x_{n}-p\right\Vert .
\end{eqnarray*}%
This shows that $\left\{ \left\Vert x_{n}-p\right\Vert \right\} $ is
decreasing, and this proves part (i). Let 
\begin{equation}
\lim_{n\rightarrow \infty }\left\Vert x_{n}-p\right\Vert =c.  \tag{1.9}
\label{xn=c}
\end{equation}%
Now, $\left\Vert Tx_{n}-p\right\Vert \leq \left\Vert x_{n}-p\right\Vert $
implies that 
\begin{equation}
\lim \sup_{n\rightarrow \infty }\left\Vert Tx_{n}-p\right\Vert \leq c. 
\tag{1.10}  \label{txn..c}
\end{equation}

Since $\left\Vert x_{n+1}-p\right\Vert \leq \left\Vert z_{n}-p\right\Vert $,
therefore 
\[
\lim \inf_{n\rightarrow \infty }\left\Vert x_{n+1}-p\right\Vert \leq \lim
\inf_{n\rightarrow \infty }\left\Vert z_{n}-p\right\Vert , 
\]%
so 
\begin{equation}
c\leq \lim \inf_{n\rightarrow \infty }\left\Vert z_{n}-p\right\Vert 
\tag{1.11}  \label{c..yn}
\end{equation}%
On the other hand, (\ref{yn..xn}) implies that 
\begin{equation}
\lim \sup_{n\rightarrow \infty }\left\Vert z_{n}-p\right\Vert \leq c. 
\tag{1.12}  \label{yn=c}
\end{equation}%
From (\ref{c..yn}) and (\ref{yn=c}) 
\[
\lim_{n\rightarrow \infty }\left\Vert z_{n}-p\right\Vert =c. 
\]%
Hence, this implies that 
\begin{equation}
c=\lim_{n\rightarrow \infty }\left\Vert z_{n}-p\right\Vert
=\lim_{n\rightarrow \infty }\left\Vert (1-\beta _{n})(x_{n}-p)+\beta
_{n}(Tx_{n}-p)\right\Vert .  \tag{1.13}  \label{xn+txn=c}
\end{equation}%
Using (\ref{xn=c}), (\ref{txn..c}), (\ref{xn+txn=c}) and Lemma \ref{lemma3},
we obtain 
\[
\lim_{n\rightarrow \infty }\left\Vert x_{n}-Tx_{n}\right\Vert =0. 
\]
\end{proof}

\begin{lemma}
\label{lemma7}Assume that all the conditions of Lemma \ref{lemma6} are
satisfied. Then, for any $p_{1},$ $p_{2}\in F(T)$, $\lim_{n\rightarrow
\infty }\left\langle x_{n},J(p_{1}-p_{2})\right\rangle $ exists; in
particular, $\left\langle p-q,J(p_{1}-p_{2})\right\rangle =0$ for all $%
p,q\in \omega _{w}(x_{n})$, the set of all weak limits of $\left\{
x_{n}\right\} .$
\end{lemma}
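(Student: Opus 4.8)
The plan is to adapt the now-standard argument (going back to work on convergence in Banach spaces with Fréchet differentiable norm) that uses the inequality (\ref{frechet}). First I would fix $p_1,p_2\in F(T)$ and set $h_n := t\,(x_n - p_1)$ for a fixed scalar $t$, applying (\ref{frechet}) with $x = p_1 - p_2$ so that
\[
\left\langle t(x_n-p_1),J(p_1-p_2)\right\rangle + \tfrac12\left\Vert p_1-p_2\right\Vert^2 \leq \tfrac12\left\Vert x_n - p_2 + t(x_n-p_1)\right\Vert^2,
\]
and similarly a matching upper bound with the $b(\cdot)$ term. The point of introducing the parameter $t$ is that $\left\Vert x_n-p_2+t(x_n-p_1)\right\Vert = \left\Vert (1+t)(x_n-p_1) + (p_1-p_2)\right\Vert$ and more to the point $\left\Vert x_n-p_2\right\Vert = \left\Vert x_n-p_1+(p_1-p_2)\right\Vert$ are both controlled: by Lemma~\ref{lemma6}(i), $\lim_n\left\Vert x_n - p\right\Vert$ exists for every $p\in F(T)$, so $\lim_n\left\Vert x_n - p_1\right\Vert$, $\lim_n\left\Vert x_n - p_2\right\Vert$ both exist.

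The key step is to show $\limsup_n \left\langle x_n - p_1, J(p_1-p_2)\right\rangle \le \liminf_n \left\langle x_n - p_1, J(p_1-p_2)\right\rangle$, i.e. that the limit exists. Writing $a_n(t) := \tfrac12\left\Vert x_n - p_2 + t(x_n - p_1)\right\Vert^2$ and using that $\left\{\left\Vert x_n - p\right\Vert\right\}$ converges for $p = p_1$ and $p = p_2$, one gets from (\ref{frechet}) that for each fixed $t>0$,
\[
\limsup_n \left\langle t(x_n-p_1), J(p_1-p_2)\right\rangle \le \limsup_n a_n(t) - \tfrac12\left\Vert p_1-p_2\right\Vert^2
\]
and
\[
\liminf_n \left\langle t(x_n-p_1), J(p_1-p_2)\right\rangle \ge \limsup_n a_n(t) - \tfrac12\left\Vert p_1-p_2\right\Vert^2 - b(t\,M),
\]
where $M := \sup_n\left\Vert x_n - p_1\right\Vert < \infty$ (finite since $\left\Vert x_n-p_1\right\Vert$ converges). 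Subtracting, dividing by $t$, and letting $t\downarrow 0$ kills the error term because $\lim_{t\downarrow 0} b(tM)/t = 0$; hence $\limsup_n\left\langle x_n-p_1,J(p_1-p_2)\right\rangle \le \liminf_n\left\langle x_n-p_1,J(p_1-p_2)\right\rangle$, so $\lim_n\left\langle x_n - p_1, J(p_1-p_2)\right\rangle$ exists, and since $\left\langle p_1, J(p_1-p_2)\right\rangle$ is a constant, $\lim_n\left\langle x_n, J(p_1-p_2)\right\rangle$ exists as well.

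For the ``in particular'' clause, let $p,q\in\omega_w(x_n)$, so there are subsequences $x_{n_j}\rightharpoonup p$ and $x_{m_k}\rightharpoonup q$. Since the full-sequence limit $\ell := \lim_n\left\langle x_n, J(p_1-p_2)\right\rangle$ exists, both subsequences inherit it: $\left\langle p, J(p_1-p_2)\right\rangle = \ell = \left\langle q, J(p_1-p_2)\right\rangle$, so $\left\langle p-q, J(p_1-p_2)\right\rangle = 0$. I expect the main obstacle to be purely bookkeeping: correctly handling the parameter $t$ in the two-sided estimate from (\ref{frechet}) and making sure the $\limsup$/$\liminf$ manipulations with $a_n(t)$ are valid — everything else (existence of $\left\Vert x_n-p\right\Vert$, boundedness of $\left\{x_n\right\}$, passing to weak subsequential limits) is immediate from Lemma~\ref{lemma6} and the definition of $\omega_w(x_n)$.
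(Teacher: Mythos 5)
The paper itself does not prove this lemma; it simply defers to Lemma 2.3 of \cite{14}, whose proof is the classical Tan--Xu/Takahashi--Kim argument via the Fr\'{e}chet-differentiability inequality (\ref{frechet}). Your outline follows that same classical route, but it has a genuine gap at exactly the point where the classical proof does its real work. In your two-sided estimate you write the lower bound as $\liminf_n\langle t(x_n-p_1),J(p_1-p_2)\rangle\geq\limsup_n a_n(t)-\tfrac12\Vert p_1-p_2\Vert^2-b(tM)$; what (\ref{frechet}) actually gives is $\liminf_n a_n(t)$ on the right, and upgrading $\liminf_n a_n(t)$ to $\limsup_n a_n(t)$ requires that $\lim_n a_n(t)$ exist for each fixed $t$. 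This is \emph{not} a consequence of Lemma \ref{lemma6}(i): knowing that $\Vert x_n-p_1\Vert$ and $\Vert x_n-p_2\Vert$ both converge does not control $\Vert tx_n+(1-t)p_1-p_2\Vert$ for intermediate $t$ (convergence of $\Vert u_n\Vert$ and of $\Vert u_n+w\Vert$ says nothing about $\Vert u_n+sw\Vert$ for $0<s<1$ in a general Banach space). The statement that $\lim_{n\to\infty}\Vert tx_n+(1-t)p_1-p_2\Vert$ exists for all $t\in[0,1]$ and all $p_1,p_2\in F(T)$ is precisely the separate, nontrivial lemma that precedes this one in \cite{14} and its antecedents; its proof uses the specific structure of the iteration (the maps $A_n(x)=T\bigl((1-\alpha_n)((1-\beta_n)x+\beta_nTx)+\alpha_nT((1-\beta_n)x+\beta_nTx)\bigr)$ are nonexpansive and fix every point of $F(T)$) together with a uniform-convexity estimate of Bruck type. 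Without that ingredient your chain $\limsup\leq\liminf+b(tM)/t$ does not close.

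Two smaller points. First, there is an algebra slip: with $x=p_1-p_2$ and $h=t(x_n-p_1)$ one has $x+h=tx_n+(1-t)p_1-p_2$, not $x_n-p_2+t(x_n-p_1)=(1+t)(x_n-p_1)+(p_1-p_2)$; the former is the quantity whose limit the auxiliary lemma supplies, the latter corresponds to a parameter $1+t>1$ and is not covered by the standard lemma. Second, your treatment of the ``in particular'' clause is correct and is indeed immediate once the full-sequence limit exists, since $J(p_1-p_2)\in E^{\ast}$ and weak convergence of a subsequence passes to the corresponding functional values.
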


\begin{proof}
The proof of this lemma is the same as the proof of Lemma 2.3 of \cite{14}.
So, we omit it here.
\end{proof}

\bigskip

We now give our weak convergence theorem.

\begin{theorem}
Let $E$ be a uniformly convex Banach space and let $C,$ $T$ and $\left\{
x_{n}\right\} $ be taken as in Lemma \ref{lemma6}. Assume that $(a)$ $E$
satifies Opial's condition or $(b)$ $E$ has a Fr\'{e}chet differentiable
norm. If $F(T)\neq \varnothing ,$ then $\left\{ x_{n}\right\} $ converges
weakly to a fixed point of $T$.
\end{theorem}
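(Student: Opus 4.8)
The plan is to follow the standard two-case argument for weak convergence of an iteration scheme for nonexpansive mappings, using the tools already assembled in the excerpt. First I would fix $p \in F(T)$, which is nonempty by hypothesis, and invoke Lemma \ref{lemma6}(i) to conclude that $\lim_{n\to\infty}\|x_n - p\|$ exists, hence $\{x_n\}$ is bounded. Since $E$ is uniformly convex it is reflexive, so $\{x_n\}$ has at least one weakly convergent subsequence; the goal is to show all weak subsequential limits coincide and lie in $F(T)$. For the membership in $F(T)$: if $x_{n_k} \rightharpoonup v$, then by Lemma \ref{lemma6}(ii) we have $\|x_{n_k} - Tx_{n_k}\| \to 0$, i.e. $(I-T)x_{n_k} \to 0$; by Lemma \ref{lemma4}, $I-T$ is demiclosed at $0$, so $v = Tv$, i.e. $v \in F(T)$.

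Next I would show uniqueness of the weak subsequential limit, splitting into the two cases offered by the hypothesis. In case $(a)$, suppose for contradiction that two subsequences $x_{n_k} \rightharpoonup p_1$ and $x_{m_j} \rightharpoonup p_2$ with $p_1 \neq p_2$; both lie in $F(T)$ by the previous paragraph. Using that $\lim_n \|x_n - p_1\|$ and $\lim_n \|x_n - p_2\|$ both exist (Lemma \ref{lemma6}(i)), Opial's condition gives
\[
\lim_{n}\|x_n - p_1\| = \lim_k \|x_{n_k} - p_1\| < \lim_k \|x_{n_k} - p_2\| = \lim_n \|x_n - p_2\|,
\]
and symmetrically $\lim_n \|x_n - p_2\| < \lim_n \|x_n - p_1\|$, a contradiction; hence $p_1 = p_2$. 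In case $(b)$, I would instead use Lemma \ref{lemma7}: for any $p_1, p_2 \in F(T)$ and any two weak subsequential limits $p, q \in \omega_w(x_n)$ we have $\langle p - q, J(p_1 - p_2)\rangle = 0$. Taking $p_1 = p$ and $p_2 = q$ (both are in $\omega_w(x_n) \subseteq F(T)$) yields $\langle p - q, J(p - q)\rangle = \|p - q\|^2 = 0$, so $p = q$.

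Finally, since $\{x_n\}$ is bounded in a reflexive space and every weakly convergent subsequence has the same limit $w$ (which, as shown, lies in $F(T)$), a routine subsequence argument shows the whole sequence converges weakly to $w$: if not, some subsequence stays outside a weak neighborhood of $w$, yet that subsequence has a further weakly convergent subsequence, necessarily to $w$ — a contradiction. I expect the main obstacle to be largely bookkeeping rather than any genuine difficulty: one must be careful in case $(b)$ that $\omega_w(x_n) \subseteq F(T)$ so that Lemma \ref{lemma7} applies to limits drawn from $\omega_w(x_n)$, and in case $(a)$ that Lemma \ref{lemma6}(i) supplies the existence of both limits $\lim_n\|x_n - p_1\|$, $\lim_n\|x_n - p_2\|$ needed to run the Opial inequality in both directions. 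Everything else reduces to the demiclosedness principle (Lemma \ref{lemma4}) and the asymptotic regularity $\|x_n - Tx_n\| \to 0$ (Lemma \ref{lemma6}(ii)), both already in hand.
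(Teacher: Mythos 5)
Your proposal is correct and follows essentially the same route as the paper's own proof: boundedness from Lemma \ref{lemma6}(i), membership of weak subsequential limits in $F(T)$ via Lemma \ref{lemma6}(ii) and the demiclosedness principle of Lemma \ref{lemma4}, and uniqueness via Opial's condition in case $(a)$ and Lemma \ref{lemma7} with $\langle p-q, J(p-q)\rangle = \|p-q\|^2 = 0$ in case $(b)$. Your closing subsequence argument for convergence of the whole sequence is a detail the paper leaves implicit, but the substance is identical.
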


\begin{proof}
From \emph{(i)} in Lemma \ref{lemma6}, we know that $\lim_{n\rightarrow
\infty }\left\Vert x_{n}-p\right\Vert $ exists \ for all $p\in F(T)$. Thus $%
\left\{ x_{n}\right\} $ is bounded. Since $E$ is uniformly convex, $\left\{
x_{n}\right\} $ has a subsequence $\left\{ x_{n_{k}}\right\} $ which
converges weakly in $C$. We prove that $\left\{ x_{n}\right\} $ has a uniqe
weak subsequential limit in $F(T)$. For this, let $u$ and $v$ be weak limits
of subsequences $\left\{ x_{n_{i}}\right\} $ and $\left\{ x_{n_{j}}\right\} $
of $\left\{ x_{n}\right\} $, respectively. By Lemma \ref{lemma6}, $%
\lim_{n\rightarrow \infty }\left\Vert x_{n}-Tx_{n}\right\Vert =0$ and $I-T$
is demiclosed with respect to zero by Lemma \ref{lemma4}; therefore, we
obtain $Tu=u.$ Again, in the same manner, we can prove that $v\in F(T).$
Next, we prove the uniqueness. To this end, first assume $(a)$ is true. If $%
u $ and $v$ are distinct, then by Opial's condition, 
\begin{eqnarray*}
\lim_{n\rightarrow \infty }\left\Vert x_{n}-u\right\Vert
&=&\lim_{n_{i}\rightarrow \infty }\left\Vert x_{n_{i}}-u\right\Vert \\
&<&\lim_{n_{i}\rightarrow \infty }\left\Vert x_{n_{i}}-v\right\Vert \\
&=&\lim_{n\rightarrow \infty }\left\Vert x_{n}-v\right\Vert \\
&=&\lim_{n_{j}\rightarrow \infty }\left\Vert x_{n_{j}}-v\right\Vert \\
&<&\lim_{n_{j}\rightarrow \infty }\left\Vert x_{n_{j}}-u\right\Vert \\
&=&\lim_{n\rightarrow \infty }\left\Vert x_{n}-u\right\Vert .
\end{eqnarray*}%
This is a contradiction, so $u=v$. Next assume $(b)$. By Lemma \ref{lemma7}, 
$\left\langle p-q,J(p_{1}-p_{2})\right\rangle =0$ for all $p,q\in \omega
_{w}(x_{n})$. Therefore $\left\Vert u-v\right\Vert ^{2}=\left\langle
u-v,J(u_{1}-v_{2})\right\rangle =0$ implies $u=v$. Consequently, $\left\{
x_{n}\right\} $ converges weakly to a point of $F$ and this completes the
proof.
\end{proof}

\bigskip

A mapping $T:C\longrightarrow C$, where $C$ is a subset of normed space $E$,
is said to satisfy Condition (A) \cite{15} if there exists a nondecrerasing
function $f:[0,\infty )\longrightarrow \lbrack 0,\infty )$ with $f(0)=0$, $%
f(r)>0$ for all $r\in (0,\infty )$ such that $\left\Vert x-Tx\right\Vert
\geq f(d(x,F(T)))$ for all $x\in C$, where $d(x,F(T))=\inf \left\{
\left\Vert x-p\right\Vert :p\in F(T)\right\} .$

\begin{theorem}
\label{teorem9}Let $E$ be a uniformly convex Banach space and let $C,$ $T$
and $\left\{ x_{n}\right\} $ be taken as in Lemma \ref{lemma6}. Then $%
\left\{ x_{n}\right\} $ converges to a point of $F(T)$ if and only if $\lim
\inf_{n\rightarrow \infty }d(x_{n},F(T))=0$.
\end{theorem}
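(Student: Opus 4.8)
The statement is an ``if and only if,'' and the forward direction is immediate: if $\{x_n\}$ converges to some $p\in F(T)$, then $d(x_n,F(T))\le \lVert x_n-p\rVert\to 0$, so in particular $\liminf_{n\to\infty} d(x_n,F(T))=0$. The substance is the reverse direction, and the plan is to exploit the monotonicity already established in Lemma \ref{lemma6}(i). First I would observe that the chain of inequalities in the proof of Lemma \ref{lemma6} actually shows $\lVert x_{n+1}-p\rVert\le \lVert x_n-p\rVert$ for every $p\in F(T)$ and every $n$; taking the infimum over $p\in F(T)$ gives that the sequence $\{d(x_n,F(T))\}$ is nonincreasing. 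Combined with the hypothesis $\liminf_{n\to\infty} d(x_n,F(T))=0$, a nonincreasing nonnegative sequence with liminf zero must satisfy $\lim_{n\to\infty} d(x_n,F(T))=0$.

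\textbf{Cauchy estimate.} The next step is to show $\{x_n\}$ is Cauchy. For $m,n\in\mathbb{N}$ with $m>n$ and any $p\in F(T)$, the monotonicity gives
\[
\lVert x_m-x_n\rVert \le \lVert x_m-p\rVert+\lVert x_n-p\rVert \le 2\lVert x_n-p\rVert.
\]
Taking the infimum over $p\in F(T)$ yields $\lVert x_m-x_n\rVert\le 2\,d(x_n,F(T))$, and since $d(x_n,F(T))\to 0$ this shows $\{x_n\}$ is a Cauchy sequence in $E$. As $E$ is a (uniformly convex, hence) Banach space and $C$ is closed, $\{x_n\}$ converges to some $q\in C$.

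\textbf{Identifying the limit.} Finally I would verify $q\in F(T)$. Since $d(x_n,F(T))\to 0$, for each $n$ pick $p_n\in F(T)$ with $\lVert x_n-p_n\rVert\to 0$; then $\lVert p_n-q\rVert\le \lVert p_n-x_n\rVert+\lVert x_n-q\rVert\to 0$, so $p_n\to q$. By nonexpansiveness, $\lVert Tq-q\rVert\le \lVert Tq-Tp_n\rVert+\lVert p_n-q\rVert\le 2\lVert q-p_n\rVert\to 0$, hence $Tq=q$ and $q\in F(T)$. (Alternatively, one can argue $F(T)$ is closed for a nonexpansive map and conclude $q\in F(T)$ directly.) The only place that requires any care is making explicit that the estimate in Lemma \ref{lemma6} holds uniformly in $p\in F(T)$ so that passing to the infimum is legitimate; everything else is routine. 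This is essentially the classical argument and presents no real obstacle beyond bookkeeping.
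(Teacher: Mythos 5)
Your proof is correct and follows essentially the same route as the paper: monotonicity of $\left\Vert x_{n}-p\right\Vert$ from Lemma \ref{lemma6}(i) forces $d(x_{n},F(T))\rightarrow 0$, the $2\,d(x_{n},F(T))$ bound gives the Cauchy property, and the limit is identified as a fixed point (you spell out the closedness of $F(T)$, which the paper merely asserts). No substantive differences.
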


\begin{proof}
Necessity is obvious. Suppose that $\lim \inf_{n\rightarrow \infty
}d(x_{n},F(T))=0.$ From \emph{(i)} in Lemma \ref{lemma6}, we know that $%
\lim_{n\rightarrow \infty }\left\Vert x_{n}-p\right\Vert $ exists for all $%
p\in F(T)$, therefore $\lim_{n\rightarrow \infty }d(x_{n},F(T))$ exists. But
by hypothesis, $\lim \inf_{n\rightarrow \infty }d(x_{n},F(T))=0$, therefore
we have $\lim_{n\rightarrow \infty }d(x_{n},F(T))=0$. We will show that $%
\left\{ x_{n}\right\} $ is a Cauchy sequence in $C$. Since $%
\lim_{n\rightarrow \infty }d(x_{n},F(T))=0$, for given $\varepsilon >0$,
there exists $n_{0}$ in $%
\mathbb{N}
$ such that for all $n\geq n_{0}$, 
\[
d(x_{n},F(T))<\frac{\varepsilon }{2}. 
\]%
Particularly, $\inf \left\{ \left\Vert x_{n_{0}}-p\right\Vert :p\in
F(T)\right\} <\frac{\varepsilon }{2}.$ Hence, there exists $p^{\ast }\in
F(T) $ such that $\left\Vert x_{n_{0}}-p^{\ast }\right\Vert <\frac{%
\varepsilon }{2}$. Now, for $m,n\geq n_{0}$, 
\[
\left\Vert x_{n+m}-x_{n}\right\Vert \leq \left\Vert x_{n+m}-p^{\ast
}\right\Vert +\left\Vert x_{n}-p^{\ast }\right\Vert \leq 2\left\Vert
x_{n_{0}}-p^{\ast }\right\Vert <\varepsilon . 
\]%
Hence $\left\{ x_{n}\right\} $ is a Cauchy sequence in $C$. Since $C$ is
closed in the Banach space $E$, there exists a point $q$ in $C$ such that $%
\lim_{n\rightarrow \infty }x_{n}=q$. Now $\lim_{n\rightarrow \infty
}d(x_{n},F(T))=0$ gives that $d(q,F(T))=0$. Since $F$ is closed, $q\in F(T)$.
\end{proof}

\bigskip

Note that this condition is weaker than the requirement that $T$ is
demicompact or $C$ is compact, see \cite{15}. Applying Theorem \ref{teorem9}%
, we obtain a strong convergence of the process (\ref{our iteration}) under
Condition (A) as follows.

\begin{theorem}
\label{teorem10}Let $E$ be a uniformly convex Banach space and let $C,$ $T$
and $\left\{ x_{n}\right\} $ be as in Lemma \ref{lemma6}. If $T$ satisfies
Condition (A), then $\left\{ x_{n}\right\} $ converges strongly to a fixed
point of $T$.
\end{theorem}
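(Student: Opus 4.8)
The plan is to deduce Theorem \ref{teorem10} from Theorem \ref{teorem9} together with the two conclusions of Lemma \ref{lemma6}. The whole argument is short: once we know that $\lim_{n\to\infty}\|x_n-p\|$ exists for every $p\in F(T)$ and that $\lim_{n\to\infty}\|x_n-Tx_n\|=0$, Condition (A) converts the second fact into a statement about $d(x_n,F(T))$, and then Theorem \ref{teorem9} finishes the job.

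First I would note that $F(T)\neq\varnothing$: indeed Condition (A) requires $f(d(x,F(T)))$ to make sense and to be dominated by $\|x-Tx\|$, and with $\|x_n-Tx_n\|\to 0$ this already forces $F(T)$ to be nonempty (alternatively, one invokes the standard fact that a nonexpansive self-map of a bounded closed convex subset of a uniformly convex space has a fixed point, using that $\{x_n\}$ is bounded by Lemma \ref{lemma6}(i)). Next, apply Condition (A): for every $n$,
\[
f\big(d(x_n,F(T))\big)\leq \|x_n-Tx_n\|.
\]
By Lemma \ref{lemma6}(ii) the right-hand side tends to $0$, hence $\lim_{n\to\infty} f(d(x_n,F(T)))=0$. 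Since $f$ is nondecreasing with $f(0)=0$ and $f(r)>0$ for $r>0$, this implies $\lim_{n\to\infty} d(x_n,F(T))=0$; in particular $\liminf_{n\to\infty} d(x_n,F(T))=0$. Then I would invoke Theorem \ref{teorem9} directly, whose hypothesis is exactly this liminf condition (and whose standing assumptions are those of Lemma \ref{lemma6}, which we have), to conclude that $\{x_n\}$ converges strongly to a point of $F(T)$.

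The only step requiring a moment's care — and hence the "main obstacle", though it is mild — is the passage from $f(d(x_n,F(T)))\to 0$ to $d(x_n,F(T))\to 0$: one argues by contradiction, assuming $d(x_n,F(T))\not\to 0$, so that along a subsequence $d(x_{n_k},F(T))\geq \delta>0$, whence $f(d(x_{n_k},F(T)))\geq f(\delta)>0$ by monotonicity of $f$, contradicting $f(d(x_n,F(T)))\to 0$. Everything else is an immediate citation of results already established in the excerpt, so no genuine difficulty remains; the theorem is essentially a corollary of Theorem \ref{teorem9}.
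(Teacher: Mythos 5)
Your proof is correct and follows essentially the same route as the paper: apply Condition (A) to the conclusion $\lim_{n\rightarrow\infty}\left\Vert x_{n}-Tx_{n}\right\Vert =0$ of Lemma \ref{lemma6}, deduce $\lim_{n\rightarrow\infty}d(x_{n},F(T))=0$ from the properties of $f$, and invoke Theorem \ref{teorem9}. The extra contradiction argument you give for passing from $f(d(x_{n},F(T)))\rightarrow 0$ to $d(x_{n},F(T))\rightarrow 0$ is a welcome elaboration of a step the paper leaves implicit.
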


\begin{proof}
From Lemma \ref{lemma6}, we know that 
\begin{equation}
\lim_{n\rightarrow \infty }\left\Vert x_{n}-Tx_{n}\right\Vert =0.  \tag{1.14}
\label{lim=0}
\end{equation}%
Using Condition (A) and (\ref{lim=0}), we get 
\[
\lim_{n\rightarrow \infty }f(d(x_{n},F(T)))\leq \lim_{n\rightarrow \infty
}\left\Vert x_{n}-Tx_{n}\right\Vert =0, 
\]%
That is, $\lim_{n\rightarrow \infty }f(d(x_{n},F(T)))=0$. Since $f:[0,\infty
)\longrightarrow \lbrack 0,\infty )$ is a nondecrerasing function satisfying 
$f(0)=0$, $f(r)>0$ for all $r\in (0,\infty )$, therefore, we have 
\[
\lim_{n\rightarrow \infty }d(x_{n},F(T))=0. 
\]%
Now all the conditions of Theorem \ref{teorem9}\ are satisfied, therefore,
by its conclusion, $\left\{ x_{n}\right\} $ converges strongly to a fixed
point of $T$.
\end{proof}


\begin{thebibliography}{99}
\bibitem{1} E. Picard, Memoire sur la theorie des equations aux derivees
partielles et la methode des approximations successives, J. Math. Pures
Appl. \textbf{6}, (1890) 145-210.

\bibitem{2} W. R. Mann, Mean value methods in iteration, Proc. Am. Math.
Soc. \textbf{4} (1953) 506-510.

\bibitem{3} S. Ishikawa, Fixed points by a new iteration method, Proc. Am.
Math. Soc. \textbf{44} (1974) 147-150.

\bibitem{4} R. P. Agarwal, D. O'Regan, D. R. Sahu, Iterative construction of
fixed points of nearly asymptotically nonexpansive mappings. J. Nonlinear
Convex Anal. \textbf{8}(1), (2007) 61-79.

\bibitem{5} D. R. Sahu, Applications of the S-iteration process to
constrained minimization problems and split feasibility problems, Fixed
Point Theory, \textbf{12}(1), (2011), 187-204.

\bibitem{6} B. E. Rhoades, Comments on two fixed point iteration methods, J.
Math. Anal. Appl., 56 (1976), no.3, 741-750.

\bibitem{7} V. Berinde, Picard iteration converges faster than Mann
iteration for a class of quasicontractive operators, Fixed Point Theory and
Appl., \textbf{2, }(2004), 97-105.

\bibitem{8} S. H. Khan, A Picard-Mann hybrid iterative process, Fixed Point
Theory and Appl., doi:10.1186/1687-1812-2013-69.

\bibitem{9} R. P. Agarwal, D. O'Regan, D.R. Sahu, Fixed point theory for
Lipschitzian-type mappings with applications, Series Topological Fixed Point
Theory and Its Appl., \textbf{6}, Springer, New York, 2009.

\bibitem{10} W. Takahashi and G. E. Kim, Approximating fixed points of
nonexpansive mappings in Banach spaces, Math. Japonica \textbf{48}(1),
(1998) 1-9.

\bibitem{11} Z. Opial, Weak convergence of the sequence of successive
approximations for nonexpansive mappings, Bull. Amer. Math. Soc. \textbf{73}
(1967), 591-597.

\bibitem{12} J. Schu, Weak and strong convergence to fixed points of
asymptotically nonexpansive mappings. Bull. Austral. Math. Soc. \textbf{43},
(1991) 15-159.

\bibitem{13} F. E. Browder, Convergence theorems for sequences of nonlinear
operators in Banach spaces, Math. Zeit. \textbf{100} (1967), 201-225.

\bibitem{14} S. H. Khan, J. K. Kim, Common fixed points of two nonexpansive
mappings by a modified faster iteration scheme. Bull. Korean Math. Soc. 
\textbf{47}(5), (2010) 973-985.

\bibitem{15} H. F. Senter, W. G. Dotson, Approximatig xed points of
nonexpansive mappings. Proc. Am. Math. Soc. \textbf{44}(2) ,(1974) 375-380.
\end{thebibliography}
\end{document}